\DeclareMathOperator{\id}{id}
\theoremstyle{plain}
\newtheorem{theorem}{Theorem}[section]
\newtheorem{lemma}[theorem]{Lemma}
\newtheorem{proposition}[theorem]{Proposition}
\newtheorem{corollary}[theorem]{Corollary}
\theoremstyle{definition}
\theoremstyle{remark}
\newtheorem{remark}[theorem]{Remark}
\numberwithin{equation}{section}
\begin{document}

\title[Topology for Engel expansions: evaluation and coding]{A topology for Engel expansions: evaluation and digit coding maps}

\author{Min Woong Ahn}
\address{Department of Mathematics Education, Silla University, 140, Baegyang-daero 700beon-gil, Sasang-gu, Busan, 46958, Republic of Korea}
\email{minwoong@silla.ac.kr}

\date{\today}

\subjclass[2020]{Primary 37B10; Secondary 11A67, 54E52}
\keywords{Engel expansion, symbolic dynamics, digit coding map, continuity, Baire category}

\maketitle

\begin{abstract}
We develop a topological framework for Engel expansions that treats both directions of the correspondence between points of $(0,1]$ and nondecreasing digit sequences. We endow the sequence space with the product topology to study the evaluation map, and we fix a nonterminating digit algorithm to study the digit coding map. We also record the correspondence between cylinder sets and fundamental intervals, and give an application to Baire category results for functions of the digits.
\end{abstract}

\section{Introduction}

Every $x\in(0,1]$ admits an Engel expansion
\begin{equation}\label{eq:intro-Engel}
x=\frac{1}{d_1(x)}+\frac{1}{d_1(x)d_2(x)}+\frac{1}{d_1(x)d_2(x)d_3(x)}+\dotsb,
\end{equation}
with integer digits $2\le d_1(x)\le d_2(x)\le\dotsb$. Set, for $x\in(0,1]$,
\begin{equation}\label{eq:intro-e1-TE}
d_1(x)\coloneqq\left\lfloor \frac{1}{x}\right\rfloor+1,
\qquad
T(x)\coloneqq d_1(x)x-1\in(0,1],
\end{equation}
and define, for $n\ge1$,
\begin{equation}\label{eq:intro-en}
d_{n+1}(x)=d_1\left(T^{n}(x)\right),
\end{equation}
where $T^n$ denotes the $n$th iterate of $T$. If $1/x\notin\mathbb N$, then $d_1(x)=\lceil 1/x\rceil$ and $0<T(x)<1$; if $x=1/m$ ($m\in\mathbb N$), then $d_1(x)=m+1$ and $T(1/m)=1/m$, so $(d_n(x))_{n\ge1}$ is nondecreasing and eventually constant equal to $m+1$. It is a classical result that the expansion \eqref{eq:intro-Engel} with digits given by \eqref{eq:intro-en} converges to $x$ (see, e.g., \cite{Gal76}).

Two standard, coexisting conventions for rationals are common. One stops the iteration of $x\mapsto x\lceil1/x\rceil-1$ at $0$ and yields a finite expansion. If we adopt the conventions $c\cdot\infty\coloneqq\infty$ ($c>0$) and $1/\infty \coloneqq 0$, then appending an $\infty$-tail to this finite expansion does not affect the resulting sum in \eqref{eq:intro-Engel}. The other---used in \eqref{eq:intro-e1-TE}--\eqref{eq:intro-en}---always replaces $\lceil1/x\rceil$ by $\lfloor1/x\rfloor+1$; at $x=1/m$ it fixes $1/m$ and thus yields the constant tail $m{+}1,m{+}1,\dotsc$ automatically. Both conventions agree on irrationals and differ only at rationals. We adopt the nonterminating convention (without $\infty$-tails) throughout.

Let $\mathbb N_\infty\coloneqq\mathbb N\cup\{\infty\}$, and let $\Sigma_E$ denote the space of Engel digit sequences (nondecreasing sequences in $\mathbb N_\infty$); its precise definition, along with the metric topology, is given in \S\ref{subsec:symbolic-metric}. 
Let $\varphi\colon\Sigma_E\to[0,1]$ and $f\colon(0,1]\to\Sigma_E$ denote the evaluation and digit coding maps, respectively; precise definitions and basic properties are given in \S\ref{subsec:evaluation-coding}.

Fractal aspects of Engel expansions have been widely studied; see, e.g., \cite{Wu00,LW01,LW03,She10,FW18,LL18,SW20,SW21,FS24}. Metric and number–theoretic topics trace back at least to Erd\H os--R\'enyi--Sz\H usz~\cite{ERS58}. Topological uniqueness on irrationals and the behavior at rationals are classical (e.g., \cite{Gal76}); Baire category results for certain level sets were analyzed in \cite{SW21}. A canonical topology on the space of Engel digit sequences, together with a systematic analysis of $\varphi$ and $f$ in that topology, appears to be missing.

We provide such a framework, showing that the evaluation map $\varphi$ is globally Lipschitz continuous (Theorem \ref{thm:phi-cont}) and that the digit coding map $f$ is continuous precisely at the irrationals (Theorem \ref{thm:f-cont-irr}) and one-sided continuous at the rationals (Theorem \ref{thm:f-one-sided}). This clean structural result immediately streamlines Baire category arguments for functions of the digits; in particular, it yields a concise proof that $\{x\in(0,1]:\lambda(x)=\infty\}$, where $\lambda\colon(0,1]\to[0,\infty]$ denotes the {\em convergence exponent} of Engel digit sequences, is comeager in $(0,1]$, thereby recovering \cite[Theorem~3.5]{SW21}. For comparison, an analogous approach for the Pierce expansion (also known as the alternating Engel expansion) appears in \cite{AhnJFG,AhnIJNT}.

This paper is organized as follows. Section~\ref{sec:preliminaries} introduces the symbolic space, the metric, and the associated maps, and recalls the classical fundamental intervals. Section~\ref{sec:main} establishes continuity properties of $\varphi$ and $f$. Section~\ref{sec:applications} applies the framework to show that broad divergence sets contain dense $G_\delta$ subsets (and are therefore comeager), and concludes that $\{x\in(0,1]:\lambda(x)=\infty\}$ is comeager.

\section{Preliminaries}\label{sec:preliminaries}

\subsection{Symbolic space and metric}\label{subsec:symbolic-metric}
Equip $\mathbb N$ with the discrete topology and set $X\coloneqq\mathbb N_\infty=\mathbb N\cup\{\infty\}$. Define
\[
\rho(x,y)\coloneqq
\begin{cases}
0,&x=y,\\
x^{-1}+y^{-1},&x\ne y,
\end{cases}
\qquad (\infty^{-1}\coloneqq0).
\]
By \cite[Lemma~3.1]{AhnJFG}, $\rho$ induces the one-point compactification topology on $X$. On $X^{\mathbb N}$ use the product metric
\begin{equation}\label{eq:metric}
d(\alpha,\beta)\coloneqq\sum_{n=1}^\infty 2^{-n}\rho(\alpha_n,\beta_n),
\qquad\alpha=(\alpha_n)_n,\ \beta=(\beta_n)_n,
\end{equation}
which metrizes the product topology and is topologically equivalent to the factorial-weight metric in \cite[\S3.2]{AhnJFG}.

Define the \emph{space of Engel sequences}
\[
\Sigma_E\coloneqq\left\{\eta=(\eta_n)_{n\ge1}\in X^{\mathbb N}:2\le \eta_1\le \eta_2\le\dotsb\right\},
\]
with the subspace topology from $X^{\mathbb N}$. Put
\[
\Sigma_E^{\mathrm{irr}}\coloneqq\left\{\eta\in \Sigma_E: \eta_n \in \mathbb{N}, \, \forall n \text{ and } \lim_{n\to\infty} \eta_n=\infty\right\},\qquad
\Sigma_E^{\mathrm{rat}}\coloneqq\Sigma_E\setminus\Sigma_E^{\mathrm{irr}}.
\]
Note that $\Sigma_E^{\mathrm{rat}}$ consists of sequences that are eventually constant in $\mathbb{N}$ or attain $\infty$ at some index.

\begin{lemma}\label{lem:SigmaE-closed}
The space of Engel sequences $\Sigma_E$ is closed in $X^{\mathbb N}$.
\end{lemma}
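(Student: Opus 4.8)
The plan is to exploit metrizability. Since the product metric $d$ of \eqref{eq:metric} induces the product topology on $X^{\mathbb N}$, the space $X^{\mathbb N}$ is metrizable, so it suffices to show that $\Sigma_E$ is sequentially closed: if $(\eta^{(k)})_{k\ge1}\subset\Sigma_E$ converges to some $\eta\in X^{\mathbb N}$, then $\eta\in\Sigma_E$. Moreover, because $d$ is a product metric with bounded summands, convergence $\eta^{(k)}\to\eta$ is equivalent to coordinatewise convergence: $\eta^{(k)}_n\to\eta_n$ in $(X,\rho)$ as $k\to\infty$, for each fixed $n$. I would therefore reduce the whole argument to a few facts about the one-point compactification $(X,\rho)$: every finite point of $X$ is isolated, so a sequence converging to a finite $m\in\mathbb N$ is eventually equal to $m$; and $\rho(\cdot,\infty)\to0$ forces the reciprocals to vanish, so a sequence converging to $\infty$ eventually exceeds any prescribed finite bound (allowing the value $\infty$ itself).

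With these facts in hand, I would verify the two defining conditions of $\Sigma_E$ separately. For the initial constraint, each $\eta^{(k)}_1\ge2$ and $\eta^{(k)}_1\to\eta_1$; if one had $\eta_1=1$, then since $1$ is isolated the sequence would satisfy $\eta^{(k)}_1=1$ for all large $k$, contradicting $\eta^{(k)}_1\ge2$. Hence $\eta_1\ne1$, i.e.\ $\eta_1\ge2$. For monotonicity, fix $n$ and argue by contradiction: suppose $\eta_n>\eta_{n+1}$. Then $\eta_{n+1}$ must be a finite integer $m$, and $\eta_n$ is some value $\ell>m$ (finite or $\infty$). Coordinatewise convergence forces $\eta^{(k)}_{n+1}=m$ for all large $k$; and either $\eta^{(k)}_n=\ell$ eventually (if $\ell$ is finite) or $\eta^{(k)}_n>m$ eventually (if $\ell=\infty$). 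In both cases $\eta^{(k)}_n>\eta^{(k)}_{n+1}$ for large $k$, contradicting $\eta^{(k)}\in\Sigma_E$. Thus $\eta_n\le\eta_{n+1}$ for every $n$, so $\eta\in\Sigma_E$.

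The only place demanding care is the passage of the order relation to the limit when $\infty$ is involved: one must check that the inequality $\alpha\le\beta$ survives coordinatewise limits even when a limiting coordinate equals $\infty$ or is approached from below. This is exactly the content of the contradiction argument above, and it amounts to verifying that $\{(\alpha,\beta)\in X\times X:\alpha\le\beta\}$ is closed in $X\times X$. An alternative, nonsequential packaging of the same proof would write $\Sigma_E=\pi_1^{-1}(\{x\ge2\})\cap\bigcap_{n\ge1}\pi_{n,n+1}^{-1}\bigl(\{(\alpha,\beta):\alpha\le\beta\}\bigr)$, where $\pi_1$ and $\pi_{n,n+1}$ are the continuous coordinate projections; closedness of $\Sigma_E$ then follows from continuity of the projections together with closedness of $\{1\}^c$ and of the order relation in $X\times X$. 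I expect the sequential version to read most cleanly here, since the metric $d$ is already in hand.
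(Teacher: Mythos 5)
Your proof is correct, but it takes a genuinely different route from the paper's. The paper argues directly that the complement of $\Sigma_E$ is open: given $\beta\notin\Sigma_E$, either $\beta_1=1$ or $\beta_{k+1}<\beta_k$ for some $k$, and in each case it exhibits an open cylinder neighborhood of $\beta$ disjoint from $\Sigma_E$ --- a two-line argument with no sequences. You instead invoke metrizability to reduce to sequential closedness and coordinatewise convergence, then do a case analysis on whether the limiting coordinates are finite or $\infty$; your closing observation that the whole proof amounts to closedness of the order relation $\{(\alpha,\beta)\in X\times X:\alpha\le\beta\}$ together with continuity of the coordinate projections is the cleanest nonsequential packaging of your idea, and it is perfectly valid. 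Notably, your version is \emph{more} careful than the paper's on one point: the paper's second cylinder $\{\alpha\in X^{\mathbb N}:\alpha_k=\beta_k,\ \alpha_{k+1}=\beta_{k+1}\}$ is open only when $\beta_k$ is finite. If $\beta_k=\infty$ (with $\beta_{k+1}$ finite, as forced by $\beta_{k+1}<\beta_k$), then $\{\alpha:\alpha_k=\infty\}$ is not open in $X^{\mathbb N}$, because the singleton $\{\infty\}$ is not open in the one-point compactification $X$; the neighborhood must be replaced by something like $\{\alpha:\alpha_k>\beta_{k+1},\ \alpha_{k+1}=\beta_{k+1}\}$, which is open and still misses $\Sigma_E$. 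Your contradiction argument in the case $\ell=\infty$ (``$\eta^{(k)}_n>m$ eventually'') performs exactly this repair automatically. The trade-off: the paper's argument, once patched, is shorter and purely topological, while yours is longer but leaves no case involving $\infty$ unexamined.
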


\begin{proof}
If $\beta\notin\Sigma_E$, then either $\beta_1=1$ or there exists $k\ge1$ with $\beta_{k+1}<\beta_k$. In the former case, the cylinder $\{\alpha\in X^{\mathbb{N}}:\alpha_1=1\}$ is an open neighborhood of $\beta$ disjoint from $\Sigma_E$. In the latter case, the cylinder $\{\alpha\in X^{\mathbb{N}}:\alpha_k=\beta_k,\ \alpha_{k+1}=\beta_{k+1}\}$ is open and disjoint from $\Sigma_E$. Hence the complement of $\Sigma_E$ is open.
\end{proof}

\begin{lemma}\label{lem:compact}
The product space $X^{\mathbb N}$ is compact; therefore $\Sigma_E$ is compact.
\end{lemma}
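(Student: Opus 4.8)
The plan is to obtain compactness in three steps: first for a single factor $X$, then for the countable product $X^{\mathbb N}$, and finally for the closed subspace $\Sigma_E$.

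First I would record that $X=\mathbb N_\infty$ is the one-point compactification of the discrete space $\mathbb N$. Since $\mathbb N$ with the discrete topology is locally compact and Hausdorff, its one-point compactification is a compact Hausdorff space, and by \cite[Lemma~3.1]{AhnJFG} the metric $\rho$ induces precisely this topology; hence $(X,\rho)$ is compact. This is also visible directly from the formula for $\rho$: any open cover must contain a member $U$ with $\infty\in U$, and as $\rho(\infty,n)=n^{-1}\to 0$, the set $U$ omits only finitely many points of $\mathbb N$, so finitely many further cover elements capture the remaining finite set.

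Next I would pass to the product. The cleanest route is to invoke Tychonoff's theorem, whereby the countable product $X^{\mathbb N}$ of compact spaces is compact. Since the metric $d$ of \eqref{eq:metric} metrizes the product topology, one may alternatively argue by sequential compactness: given a sequence $(\alpha^{(k)})_{k\ge 1}$ in $X^{\mathbb N}$, a diagonal extraction produces a subsequence along which every coordinate converges in the compact factor $X$, and the uniform tail estimate $\sum_{n>N}2^{-n}\rho(\alpha_n,\beta_n)\le 2\sum_{n>N}2^{-n}$ upgrades coordinatewise convergence to convergence in $d$. Either way, $X^{\mathbb N}$ is compact.

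Finally, $\Sigma_E$ is closed in $X^{\mathbb N}$ by Lemma~\ref{lem:SigmaE-closed}, and a closed subset of a compact space is compact; this yields the stated ``therefore'' clause. I do not anticipate a genuine obstacle: the only step meriting care is the upgrade from coordinatewise to $d$-convergence in the sequential argument, which the geometric weights $2^{-n}$ together with the bound $\rho\le 2$ on each factor render routine.
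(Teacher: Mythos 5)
Your proposal is correct and follows essentially the same route as the paper: $X$ is compact as the one-point compactification of discrete $\mathbb N$, Tychonoff's theorem gives compactness of $X^{\mathbb N}$, and Lemma~\ref{lem:SigmaE-closed} plus the fact that closed subsets of compact spaces are compact yields the claim for $\Sigma_E$. The extra material you include (the direct cover argument for $X$ and the diagonal sequential-compactness alternative) is sound but not needed.
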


\begin{proof}
Since $X$ is compact (one-point compactification), Tychonoff's theorem implies that $X^{\mathbb N}$ is compact in the product topology, which is metrized by the metric $d$ in \eqref{eq:metric}. Then Lemma~\ref{lem:SigmaE-closed} yields compactness of $\Sigma_E$.
\end{proof}

\subsection{Cylinders and fundamental intervals}\label{subsec:fund-intervals}
A finite sequence $\sigma=(\sigma_1,\dotsc,\sigma_n)$ of integers with $2\le\sigma_1\le\cdots\le\sigma_n$ is called an \emph{admissible word of length $n$} (see \cite[Definition~2.1 \& Proposition~2.2]{SW21}). We write $|\sigma|=n$ for the length of $\sigma$.

For an admissible word $\sigma$ of length $n$, define the \emph{level-$n$ cylinder}
\[
C_n(\sigma)\coloneqq\left\{\eta\in\Sigma_E: \eta_1=\sigma_1,\ \dotsc,\ \eta_n=\sigma_n\right\}.
\]
The corresponding \emph{level-$n$ fundamental interval} is the preimage under $f$:
\[
J_n(\sigma)\coloneqq f^{-1}\left(C_n(\sigma)\right)\subseteq(0,1].
\]

\begin{proposition}[{\cite[p.~84]{Gal76}}]\label{prop:fundamental}
For an admissible word $\sigma$ of length $n$ as above,
\[
J_n(\sigma)=\left(\ell(\sigma),\,r(\sigma)\right],
\]
where
\[
\ell(\sigma)=\sum_{k=1}^n\frac{1}{\sigma_1\dotsm\sigma_k},\quad
r(\sigma)=\sum_{k=1}^{n-1}\frac{1}{\sigma_1\dotsm\sigma_k}
+\frac{1}{\sigma_1\dotsm\sigma_{n-1}(\sigma_n-1)}.
\]
In particular, $C_n(\sigma)$ is clopen in $\Sigma_E$, and $\varphi\left(C_n(\sigma)\right)=[\ell(\sigma),r(\sigma)]$.
\end{proposition}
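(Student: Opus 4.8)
The plan is to prove the formula for $J_n(\sigma)$ by induction on the length $n$, exploiting the conjugacy between the shift on digit sequences and the Engel map $T$. Two facts drive the argument: the inversion relation $x=(1+T(x))/d_1(x)$, immediate from $T(x)=d_1(x)x-1$ in \eqref{eq:intro-e1-TE}, and the shift identity $d_{n+1}(x)=d_n(T(x))$, which follows from \eqref{eq:intro-en} since $d_n(T(x))=d_1\big(T^{n-1}(T(x))\big)=d_1(T^n(x))=d_{n+1}(x)$ for $n\ge1$.

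For the base case $n=1$, I would compute directly from $d_1(x)=\lfloor 1/x\rfloor+1$: the equation $d_1(x)=\sigma_1$ is equivalent to $\sigma_1-1\le 1/x<\sigma_1$, i.e.\ $x\in(1/\sigma_1,1/(\sigma_1-1)]$, which is exactly $(\ell(\sigma),r(\sigma)]$ when $|\sigma|=1$ (the empty sum vanishes and the empty product equals $1$). For the inductive step, writing $m=\sigma_1$ and $\sigma'=(\sigma_2,\dots,\sigma_n)$ (admissible, since $\sigma_2\ge\sigma_1\ge2$), I would prove the set identity $J_n(\sigma)=\psi_m(J_{n-1}(\sigma'))$ with $\psi_m(t)=(1+t)/m$. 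The inclusion $\supseteq$ uses that any $y\in J_{n-1}(\sigma')$ has $d_1(y)=\sigma_2\ge m$, hence $y\le 1/(\sigma_2-1)\le1/(m-1)$, so $x=\psi_m(y)\in(1/m,1/(m-1)]$ forces $d_1(x)=m$ and $T(x)=y$; the shift identity then propagates the remaining digits. The inclusion $\subseteq$ reverses this: $x\in J_n(\sigma)$ gives $d_1(x)=m$, whence $T(x)=mx-1\in J_{n-1}(\sigma')$ and $x=\psi_m(T(x))$. Since $\psi_m$ is a strictly increasing affine map, it sends $(\ell(\sigma'),r(\sigma')]$ to $\big((1+\ell(\sigma'))/m,\,(1+r(\sigma'))/m\big]$, and a short index shift in the defining sums verifies $(1+\ell(\sigma'))/m=\ell(\sigma)$ and $(1+r(\sigma'))/m=r(\sigma)$.

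For the two \emph{in particular} assertions, clopenness of $C_n(\sigma)$ is routine: each condition $\{\eta:\eta_k=\sigma_k\}$ is the preimage under the continuous $k$-th projection of the singleton $\{\sigma_k\}$, which is clopen in $X$ because $\sigma_k\in\mathbb N$ is isolated and $X$ is Hausdorff; a finite intersection of such sets with $\Sigma_E$ is again clopen. For the image under $\varphi$, I would combine the $J_n$ formula with the classical identity $\varphi(f(x))=x$ on $(0,1]$ (the convergence of \eqref{eq:intro-Engel}). Since $f(J_n(\sigma))\subseteq C_n(\sigma)$, this gives $\varphi(C_n(\sigma))\supseteq\varphi(f(J_n(\sigma)))=J_n(\sigma)=(\ell(\sigma),r(\sigma)]$, while the left endpoint $\ell(\sigma)$ is attained by the sequence $(\sigma_1,\dots,\sigma_n,\infty,\infty,\dots)\in C_n(\sigma)$. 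Conversely, for any $\eta\in C_n(\sigma)$ the tail $\sum_{k>n}1/(\sigma_1\cdots\sigma_n\eta_{n+1}\cdots\eta_k)$ is nonnegative and, because $\eta_{n+1}\le\eta_{n+2}\le\cdots$ are all $\ge\sigma_n$, is bounded above by the geometric sum $\frac{1}{\sigma_1\cdots\sigma_n}\sum_{j\ge1}\sigma_n^{-j}=\frac{1}{\sigma_1\cdots\sigma_n(\sigma_n-1)}$; hence $\varphi(\eta)\in[\ell(\sigma),r(\sigma)]$, and the image is exactly $[\ell(\sigma),r(\sigma)]$.

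The step I expect to be most delicate is the right endpoint and its behavior at rationals. The replacement of $\sigma_n$ by $\sigma_n-1$ in $r(\sigma)$ originates in the base case (where $r=1/(\sigma_1-1)$) and must survive the recursion intact; the algebraic check that $\frac{1}{\sigma_1\cdots\sigma_n}+\frac{1}{\sigma_1\cdots\sigma_n(\sigma_n-1)}$ collapses to $\frac{1}{\sigma_1\cdots\sigma_{n-1}(\sigma_n-1)}$ is precisely what makes the constant-$\sigma_n$ tail land on $r(\sigma)$. One must also keep the half-open shape $(\ell,r]$ consistent: the right endpoint belongs to $J_n(\sigma)$ exactly because the nonterminating convention assigns to $r(\sigma)$ the constant tail $\sigma_n,\sigma_n,\dots$ rather than an $\infty$-tail, so its first $n$ digits remain $\sigma_1,\dots,\sigma_n$.
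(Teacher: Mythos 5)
Your proof is correct, but note that the paper itself gives no proof of this proposition: it is quoted from Galambos \cite{Gal76} (p.~84), with only the subsequent remark indicating how the two endpoints of $\varphi(C_n(\sigma))$ are attained. What you supply is therefore a self-contained argument where the paper relies on a citation. Your route---induction on the word length via the identity $J_n(\sigma)=\psi_{\sigma_1}(J_{n-1}(\sigma'))$, where $\psi_m(t)=(1+t)/m$ is the inverse branch of $T$ forced by $d_1=m$---is the natural dynamical proof, and the details check out: the base case $J_1(\sigma_1)=(1/\sigma_1,\,1/(\sigma_1-1)]$ follows from $d_1(x)=\lfloor 1/x\rfloor+1$ (with the right endpoint included and the left excluded, exactly matching the nonterminating convention); the shift identity $d_{k+1}(x)=d_k(T(x))$ propagates the digits; the strictly increasing affine map preserves the half-open shape; and the index-shift algebra $(1+\ell(\sigma'))/\sigma_1=\ell(\sigma)$, $(1+r(\sigma'))/\sigma_1=r(\sigma)$ is right. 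Your treatment of the image $\varphi(C_n(\sigma))=[\ell(\sigma),r(\sigma)]$ also matches the paper's remark---the $\infty$-tail attains $\ell(\sigma)$ and the constant $\sigma_n$-tail attains $r(\sigma)$---but you additionally prove the required upper bound via the geometric-series estimate over nondecreasing tails and the collapse $\tfrac{1}{\sigma_1\cdots\sigma_n}+\tfrac{1}{\sigma_1\cdots\sigma_n(\sigma_n-1)}=\tfrac{1}{\sigma_1\cdots\sigma_{n-1}(\sigma_n-1)}$, which the paper leaves implicit. The benefit of your version is that it makes the result self-contained and makes transparent exactly where the nonterminating convention enters (why $r(\sigma)\in J_n(\sigma)$ but $\ell(\sigma)\notin J_n(\sigma)$); the cost is length, which is presumably why the paper defers to the classical reference.
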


\begin{remark}
The image $\varphi\left(C_n(\sigma)\right)$ attains its left endpoint $\ell(\sigma)$ by appending $\sigma$ with an $\infty$-tail, $(\sigma_1,\dotsc,\sigma_n,\infty,\infty,\dotsc)$, and its right endpoint $r(\sigma)$ by the largest nonterminating sequence in the cylinder, $(\sigma_1,\dotsc,\sigma_{n-1},\sigma_n,\sigma_n,\dotsc)$ (which is the nonterminating code for $r(\sigma)$).
\end{remark}

\subsection{Evaluation and coding}\label{subsec:evaluation-coding}

The evaluation map $\varphi\colon\Sigma_E\to[0,1]$ is defined by
\[
\varphi(\eta)\coloneqq\sum_{n=1}^\infty\frac{1}{\eta_1\eta_2\dotsm\eta_n}.
\]
Since $(\eta_n)$ is nondecreasing with $\eta_n\ge2$, the tail is dominated by a geometric series, so $\varphi$ is well defined on $\Sigma_E$.
(We adopt the conventions $\infty^{-1}\coloneqq0$ and, for products, that if $\eta_k=\infty$ then $(\eta_1\dotsm\eta_n)^{-1}=0$ for all $n\ge k$; hence the tail vanishes once a coordinate equals $\infty$.)

The digit coding map $f\colon(0,1]\to\Sigma_E$ is given by $f(x)\coloneqq(d_n(x))_{n\ge1}$, where $d_1,\,T,\,d_{n+1}$ are as in \eqref{eq:intro-e1-TE}–\eqref{eq:intro-en}. Under our nonterminating convention, rationals are encoded by eventually constant integer sequences; for instance, $f(3/4)=(2,3,3,3,\dotsc)$. Note that $\varphi(2,3,3,\dotsc)=3/4=\varphi(2,2,\infty,\infty,\dotsc)$, so $\varphi$ is not injective on $\Sigma_E$ at rationals.

\begin{proposition}[\cite{Gal76}]\label{prop:basic-inversion}
We have $\varphi\circ f=\id_{(0,1]}$, and $f\circ\varphi=\id$ on $\Sigma_E^{\mathrm{irr}}$.
\end{proposition}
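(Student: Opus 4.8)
The plan is to establish the two identities separately, in both cases exploiting the exact one-step relation induced by the Engel map $T$.

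For $\varphi\circ f=\id_{(0,1]}$ I would first record an exact remainder formula. Writing $P_n(x)\coloneqq d_1(x)\cdots d_n(x)$ and $P_0(x)\coloneqq 1$, the definition $T(x)=d_1(x)x-1$ rearranges to $x=\tfrac{1}{d_1(x)}+\tfrac{T(x)}{d_1(x)}$; applying this with $x$ replaced by $T^k(x)$ and using $d_{k+1}(x)=d_1(T^k(x))$, an induction on $n$ yields $x=\sum_{k=1}^n \tfrac{1}{P_k(x)}+\tfrac{T^n(x)}{P_n(x)}$ for every $n\ge 1$. Since each digit is at least $2$ we have $P_n(x)\ge 2^n$, and $T^n(x)\in(0,1]$, so the remainder $T^n(x)/P_n(x)$ is bounded by $2^{-n}$ and tends to $0$. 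Letting $n\to\infty$ gives $\varphi(f(x))=\sum_{k\ge 1}1/P_k(x)=x$; the argument is uniform in $x$ and in particular covers the rational case under the nonterminating convention.

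For $f\circ\varphi=\id$ on $\Sigma_E^{\mathrm{irr}}$ the crux is a one-step recovery lemma: if $\eta\in\Sigma_E^{\mathrm{irr}}$ and $x=\varphi(\eta)$, then $d_1(x)=\eta_1$ and $T(x)=\varphi(\sigma\eta)$, where $\sigma$ denotes the left shift. To identify the first digit I would estimate $x=\varphi(\eta)$ on both sides. The first term alone gives $x>1/\eta_1$, strictly, because the remaining terms are positive (here $\eta\in\Sigma_E^{\mathrm{irr}}$ guarantees infinitely many finite, hence positive, terms). For the upper bound, monotonicity $\eta_k\ge\eta_1$ gives $\eta_1\cdots\eta_n\ge\eta_1^n$, so $x\le\sum_{n\ge 1}\eta_1^{-n}=1/(\eta_1-1)$. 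Together these place $1/x$ in $[\eta_1-1,\eta_1)$, whence $\lfloor 1/x\rfloor=\eta_1-1$ and $d_1(x)=\eta_1$. The relation $T(x)=\varphi(\sigma\eta)$ is then algebra: factoring $\eta_1$ out of the series gives $\eta_1\varphi(\eta)=1+\varphi(\sigma\eta)$, so $T(x)=d_1(x)x-1=\eta_1\varphi(\eta)-1=\varphi(\sigma\eta)$.

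It remains to iterate. Since $\sigma\eta\in\Sigma_E^{\mathrm{irr}}$ as well (it is integer-valued, nondecreasing, and still diverges to $\infty$), the one-step lemma applies verbatim at every stage, and an induction on $n$ shows $T^n(x)=\varphi(\sigma^n\eta)$ and $d_{n+1}(x)=d_1(T^n(x))=(\sigma^n\eta)_1=\eta_{n+1}$ for all $n\ge 0$. Hence $f(x)=(d_m(x))_{m\ge 1}=\eta$. I expect the only genuinely delicate point to be the two-sided estimate $1/\eta_1<\varphi(\eta)\le 1/(\eta_1-1)$ in the one-step lemma: the strict lower bound is what forces $1/x<\eta_1$, while the geometric majorization $\eta_1\cdots\eta_n\ge\eta_1^n$ supplies the upper bound, so that the floor reads off $\eta_1-1$ exactly. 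Everything else reduces to the telescoping identity or to a routine induction.
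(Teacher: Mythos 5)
Your proof is correct, but it takes a genuinely different route from the paper: the paper's ``proof'' of this proposition is essentially a citation, invoking the classical facts from Galambos that the Engel expansion of $x$ converges to $x$ and is unique, and that a sequence in $\Sigma_E^{\mathrm{irr}}$ is precisely the digit sequence of the (irrational) number it evaluates to. You instead prove both identities from scratch. For $\varphi\circ f=\id$ you use the telescoping remainder identity $x=\sum_{k=1}^n 1/P_k(x)+T^n(x)/P_n(x)$ together with $P_n(x)\ge 2^n$ and $T^n(x)\in(0,1]$; for $f\circ\varphi=\id$ on $\Sigma_E^{\mathrm{irr}}$ you prove a one-step recovery lemma (the two-sided estimate $1/\eta_1<\varphi(\eta)\le 1/(\eta_1-1)$ pins down $\lfloor 1/x\rfloor=\eta_1-1$, hence $d_1(x)=\eta_1$) plus the shift relation $T(\varphi(\eta))=\varphi(\sigma\eta)$, and then iterate. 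Both steps are sound; in particular you are right that the strict lower bound is the delicate point, and it is exactly where finiteness of the entries enters --- for a sequence with an $\infty$-tail such as $(2,\infty,\infty,\dotsc)$ one has $\varphi(\eta)=1/2=1/\eta_1$ and $d_1(1/2)=3\ne 2$, so the recovery genuinely fails off the set of integer-valued sequences, which is why the proposition restricts to $\Sigma_E^{\mathrm{irr}}$. What your approach buys is self-containedness and a precise accounting of where each hypothesis is used (your argument in fact shows the stronger statement that $f\circ\varphi=\id$ holds on \emph{all} integer-valued sequences in $\Sigma_E$, including the eventually constant ones coding rationals, since only finiteness of the entries --- not $\eta_n\to\infty$ --- is needed); what the paper's approach buys is brevity, deferring the analytic content to the standard reference.
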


\begin{proof}
By construction, the Engel expansion of $x\in(0,1]$ is unique, so $\varphi(f(x))=x$.
If $\eta\in\Sigma_E^{\mathrm{irr}}$, then $\eta_n\in\mathbb N$ with $\eta_n\to\infty$, and $\varphi(\eta)$ has Engel digits $\eta_n$; hence $f(\varphi(\eta))=\eta$.
\end{proof}

\section{Main results: continuity and symbolic structure}\label{sec:main}

\subsection{Continuity of the evaluation map}

\begin{theorem}\label{thm:phi-cont}
The evaluation map $\varphi\colon\Sigma_E\to[0,1]$ is Lipschitz (for $d$ in \eqref{eq:metric} and the Euclidean metric).
\end{theorem}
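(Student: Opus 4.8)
The plan is to exploit that every Engel digit is at least $2$, so that the partial products $\eta_1\cdots\eta_n$ grow at least geometrically and the tails of $\varphi$ are dominated by geometric series whose decay matches the weight $2^{-n}$ built into the metric $d$. Concretely, given distinct $\alpha,\beta\in\Sigma_E$ (if $\alpha=\beta$ there is nothing to prove), I would set $k\coloneqq\min\{n:\alpha_n\neq\beta_n\}$, the first coordinate at which they differ. Writing $\varphi(\alpha)-\varphi(\beta)=\sum_{n\ge1}\bigl((\alpha_1\cdots\alpha_n)^{-1}-(\beta_1\cdots\beta_n)^{-1}\bigr)$, I first note that every term with $n<k$ vanishes, since $\alpha$ and $\beta$ share their first $k-1$ coordinates. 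The triangle inequality applied to the remaining tail, using that each summand is nonnegative, then gives
\[
|\varphi(\alpha)-\varphi(\beta)|\le\sum_{n\ge k}\frac{1}{\alpha_1\cdots\alpha_n}+\sum_{n\ge k}\frac{1}{\beta_1\cdots\beta_n}.
\]

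Next I would bound each tail separately. Set $Q\coloneqq\alpha_1\cdots\alpha_{k-1}=\beta_1\cdots\beta_{k-1}$ (the empty product $1$ when $k=1$). For $n\ge k$ the factors $\alpha_{k+1},\dots,\alpha_n$ are each at least $2$, so $\alpha_1\cdots\alpha_n\ge Q\,\alpha_k\,2^{\,n-k}$, whence
\[
\sum_{n\ge k}\frac{1}{\alpha_1\cdots\alpha_n}\le\frac{1}{Q\,\alpha_k}\sum_{n\ge k}2^{-(n-k)}=\frac{2}{Q\,\alpha_k},
\]
with the identical bound $2/(Q\,\beta_k)$ for $\beta$. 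Adding the two tails yields
\[
|\varphi(\alpha)-\varphi(\beta)|\le\frac{2}{Q}\Bigl(\frac{1}{\alpha_k}+\frac{1}{\beta_k}\Bigr)=\frac{2}{Q}\,\rho(\alpha_k,\beta_k),
\]
which crucially reproduces exactly the factor $\rho(\alpha_k,\beta_k)$ that the metric attaches to coordinate $k$.

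Finally I would compare with $d$ itself. Since all digits are at least $2$, we have $Q\ge 2^{k-1}$, so $2/Q\le 2^{\,2-k}=4\cdot 2^{-k}$, and therefore
\[
|\varphi(\alpha)-\varphi(\beta)|\le 4\cdot 2^{-k}\rho(\alpha_k,\beta_k)\le 4\,d(\alpha,\beta),
\]
the last step because $2^{-k}\rho(\alpha_k,\beta_k)$ is a single nonnegative term in the series defining $d(\alpha,\beta)$. This establishes the Lipschitz bound with constant $4$; I would make no attempt to optimize the constant.

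The step I expect to require the most care is the bookkeeping of the $\infty$ conventions, rather than any genuine analytic difficulty: I must verify that the cancellation of the first $k-1$ terms, the geometric domination, and the identity $\alpha_k^{-1}+\beta_k^{-1}=\rho(\alpha_k,\beta_k)$ all survive when one (or both) of $\alpha_k,\beta_k$ equals $\infty$. Each does, because $\infty^{-1}=0$ forces the relevant partial-product reciprocals to vanish from coordinate $k$ onward, consistently with both the definition of $\varphi$ and that of $\rho$. The conceptual heart is the single observation that $\eta_n\ge2$ makes the partial products grow geometrically, aligning the natural decay rate of $\varphi$ with the weight $2^{-n}$ in $d$.
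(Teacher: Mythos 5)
Your proposal is correct and follows essentially the same route as the paper's proof: split at the first differing coordinate, factor out the common prefix product, bound each tail geometrically by $2/(Q\,\alpha_k)$ and $2/(Q\,\beta_k)$, and compare with the single term $2^{-k}\rho(\alpha_k,\beta_k)$ of the metric to obtain the Lipschitz constant $4$. The only (immaterial) difference is that you dominate the tails using that every digit is at least $2$, whereas the paper uses the nondecreasing property to compare against the geometric series $\sum_j \eta_n^{-(j+1)}=1/(\eta_n-1)\le 2/\eta_n$; both yield the identical bound.
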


\begin{proof}
Let $\eta,\eta'\in\Sigma_E$ and let $n$ be the first index at which they differ. Put $A\coloneqq1/(\eta_1\cdots\eta_{n-1})=1/(\eta_1'\cdots\eta_{n-1}')$. Then
\[
\varphi(\eta)-\varphi(\eta')
= A\left(\sum_{j=0}^{\infty}\frac{1}{\eta_n\dotsm \eta_{n+j}}
-\sum_{j=0}^{\infty}\frac{1}{\eta_n'\dotsm \eta_{n+j}'}\right).
\]
Since the digits are nondecreasing, $\eta_{n+\ell}\ge \eta_n$ and $\eta_{n+\ell}'\ge\eta_n'$ for all $\ell\ge0$. Hence
\[
\sum_{j=0}^\infty \frac{1}{\eta_n\dotsm\eta_{n+j}} \leq \sum_{j=0}^\infty \eta_n^{-(j+1)}=\frac{1}{\eta_n-1}\le\frac{2}{\eta_n},
\]
and similarly with $\eta$ replaced by $\eta'$. Therefore
\[
\left|\varphi(\eta)-\varphi(\eta')\right|
\le A\left(\frac{2}{\eta_n}+\frac{2}{\eta_n'}\right)
\le 2^{2-n}\left(\frac{1}{\eta_n}+\frac{1}{\eta_n'}\right),
\]
since $\eta_k,\eta_k'\ge2$ imply $A\le 2^{-(n-1)}$. On the other hand,
\[
d(\eta,\eta')\ge 2^{-n}\rho(\eta_n,\eta_n')=2^{-n}\left(\frac{1}{\eta_n}+\frac{1}{\eta_n'}\right).
\]
Combining gives $\left|\varphi(\eta)-\varphi(\eta')\right|\le 4\,d(\eta,\eta')$.
\end{proof}

\subsection{Continuity properties of the coding map}\label{subsec:coding-cont}

Throughout this subsection we work in the compact metric space $(\Sigma_E,d)$ (Lemma~\ref{lem:compact}) and use that $\varphi$ is Lipschitz (Theorem~\ref{thm:phi-cont}). We also recall Proposition~\ref{prop:basic-inversion}: $\varphi\circ f=\id_{(0,1]}$ and $f\circ\varphi=\id$ on $\Sigma_E^{\mathrm{irr}}$.

\begin{lemma}\label{lem:two-codes}
Let $x\in(0,1]\cap\mathbb Q$. Then there is a unique admissible word $\sigma=(\sigma_1,\dots,\sigma_n)$ such that
\[
x=r(\sigma)=\sum_{k=1}^{n-1}\frac{1}{\sigma_1\cdots\sigma_k}
+\frac{1}{\sigma_1\cdots\sigma_{n-1}(\sigma_n-1)}.
\]
Under the nonterminating convention,
\[
\alpha\coloneqq f(x)=(\sigma_1,\dots,\sigma_{n-1},\sigma_n,\sigma_n,\sigma_n,\dotsc).
\]
The terminating Engel code of the same $x$ is obtained by decreasing the last eligible digit by $1$ and appending an $\infty$-tail: there exists an index
\[
j=\max\{1\le k\le n: \sigma_k\ge 3\}
\]
such that
\[
\alpha'\coloneqq(\sigma_1,\dots,\sigma_{j-1},\sigma_j-1,\infty,\infty,\dotsc)\in\Sigma_E
\quad\text{and}\quad
\varphi(\alpha')=\varphi(\alpha)=x.
\]
In particular, $\varphi^{-1}(\{x\})=\{\alpha,\alpha'\}$ and $f(x)=\alpha$.
\end{lemma}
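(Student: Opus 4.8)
The plan is to build everything on a single classical input—that $x$ rational forces $f(x)=\alpha$ to be eventually constant (\cite{Gal76}; see also the discussion in \S\ref{sec:preliminaries})—together with one sharp ``first--difference'' estimate for $\varphi$.

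First I would fix the eventual value $c=\lim_n\alpha_n\in\mathbb N$ and let $n$ be the least index with $\alpha_n=c$; this makes $\sigma\coloneqq(\alpha_1,\dots,\alpha_n)$ the \emph{reduced} word, so that $\sigma_{n-1}<\sigma_n=c$ when $n\ge2$, and $\sigma=(c)$ when $n=1$. Summing the constant geometric tail exactly as in the Remark after Proposition~\ref{prop:fundamental} gives $r(\sigma)=\varphi(\alpha)=x$, which settles existence. I must flag here that literal uniqueness of $\sigma$ fails: a one-line geometric computation shows $r(\sigma_1,\dots,\sigma_{n-1},c)=r(\sigma_1,\dots,\sigma_{n-1},c,c)$, so every truncation of $\alpha$ at an index $\ge n$ has right endpoint $x$. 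The correct reading, which is the one the rest of the lemma uses, is that the reduced word (equivalently the unique word with $\sigma_{n-1}<\sigma_n$, or $n=1$) is unique; I would state this interpretation explicitly.

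The engine of the remaining claims is the following estimate. Suppose $\eta\neq\eta'$ in $\Sigma_E$ satisfy $\varphi(\eta)=\varphi(\eta')$, let $i$ be their first index of disagreement, and normalize so that $\eta_i<\eta_i'$. Factoring out the common prefix product and using that the digits are nondecreasing (with the conventions on $\infty$), I would bound the two tails from index $i$ by
\[
\sum_{k\ge i}\frac1{\eta_i\cdots\eta_k}\ \ge\ \frac1{\eta_i}\ \ge\ \frac1{\eta_i'-1}\ \ge\ \sum_{k\ge i}\frac1{\eta_i'\cdots\eta_k'},
\]
the outer inequalities coming from keeping only the first term and from the geometric majorant, and the middle one from $\eta_i'\ge\eta_i+1$. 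Since the two sums are equal, all three inequalities are equalities, and this pins the configuration down completely: $\eta_i'=\eta_i+1$, the sequence $\eta$ terminates immediately after index $i$ (all $\infty$ thereafter), and $\eta'$ is constant equal to $\eta_i'$ from index $i$ on. In particular exactly one of any two coinciding codes terminates, so $\varphi^{-1}(\{x\})$ contains at most one terminating and at most one nonterminating sequence.

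Finally I would assemble. The nonterminating preimage is $\alpha=f(x)$. For the terminating one, the estimate shows it must read $(\alpha_1,\dots,\alpha_{i-1},c-1,\infty,\infty,\dots)$ with $\alpha$ constant equal to $c$ from index $i$; admissibility forces $\alpha_{i-1}<c$, whence $i\le n$, while $\alpha_i=c$ gives $i\ge n$, so $i=n$ and $c=\sigma_n\ge3$. Since $\sigma_n=c\ge3$ we get $j=\max\{k\le n:\sigma_k\ge3\}=n$, so this code is exactly $\alpha'$, and $\varphi(\alpha')=r(\sigma)=x$ holds by the very definition of $r$. This yields $\varphi^{-1}(\{x\})=\{\alpha,\alpha'\}$ and $f(x)=\alpha$; uniqueness of the reduced $\sigma$ then follows because any admissible word with right endpoint $x$ has constant-tail extension equal to the unique nonterminating code $\alpha$, hence is a truncation of $\alpha$ at an index $\ge n$. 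The main obstacle is the tightness of the first--difference estimate—verifying that each of the three inequalities is an equality only in the stated extremal configuration, and tracking the $\infty$ conventions—since this one computation simultaneously delivers uniqueness, the explicit shapes of $\alpha$ and $\alpha'$, and the two-point fibre. One should also treat $x=1$ separately: there $\alpha=(2,2,\dots)$, the digit $c-1=1$ is inadmissible, and $\varphi^{-1}(\{1\})=\{(2,2,\dots)\}$ is a singleton, so the two-code conclusion requires $x\in(0,1)$.
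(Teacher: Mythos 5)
Your proposal is correct, and it takes a genuinely different route from the paper's proof. The paper disposes of the lemma in three sentences: it reads the existence and uniqueness of $\sigma$ and $\alpha$ off the fundamental-interval description in Proposition~\ref{prop:fundamental} (``$x$ is the right endpoint of a unique fundamental interval''), verifies the carry rule for $\alpha'$ by direct computation with a pointer to \cite{Gal76}, and asserts that uniqueness of the two codes ``follows from monotonicity and the endpoint structure.'' Your engine is different and more self-contained: the first-difference tightness estimate, in which the first term of one tail is played against the geometric majorant of the other, so that equality of values forces $\eta_i'=\eta_i+1$, an immediate $\infty$-tail on one side, and a constant tail on the other. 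I checked the equality analysis (including the $\infty$ conventions, and the case $\eta_i'=\infty$, which equality rules out): it is sound, and this single computation simultaneously delivers the at-most-two-point fibre, the exact shape of the terminating code, the carry rule, and the uniqueness of the reduced word --- facts the paper instead outsources to \cite{Gal76} and to the unproved endpoint-uniqueness claim. The only external input you need, eventual constancy of $f(x)$ for rational $x$, is the same classical fact the paper leans on.

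Beyond that, the two corrections you flag are genuine defects of the statement which the paper's proof glosses over, and your repairs are the right ones. First, literal uniqueness of $\sigma$ fails: the identity $\tfrac1c+\tfrac1{c(c-1)}=\tfrac1{c-1}$ gives $r(\sigma_1,\dots,\sigma_{n-1},c)=r(\sigma_1,\dots,\sigma_{n-1},c,c)$, e.g.\ $r(2,3)=r(2,3,3)=3/4$, so $x$ is the right endpoint of infinitely many fundamental intervals (one per truncation of $\alpha$ at each index $\ge n$), and only the \emph{reduced} word --- last digit strictly exceeding its predecessor, or $n=1$ --- is unique; the paper's own proof asserts uniqueness of the fundamental interval, which is false as written. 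Second, $x=1$ is a counterexample to the lemma as stated: $f(1)=(2,2,\dotsc)$, so $j=\max\{k\le n:\sigma_k\ge3\}$ does not exist and $\varphi^{-1}(\{1\})=\{(2,2,\dotsc)\}$ is a singleton rather than a two-point set, exactly as you observe. (This does not break the downstream use in Theorem~\ref{thm:f-one-sided}: the left-continuity argument only needs $\varphi^{-1}(\{x\})\subseteq\{\alpha,\alpha'\}$, and at $x=1$ the right-continuity assertion is vacuous anyway.) So your proof, restricted to $x\in(0,1)$ and with the reduced-word reading of uniqueness made explicit, proves the corrected statement; the paper's proof, as written, does not even address these points.
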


\begin{proof}
The description of $J_n(\sigma)=(\ell(\sigma),r(\sigma)]$ in Proposition~\ref{prop:fundamental} shows that $x$ is the (right) endpoint of a unique fundamental interval, giving the stated $\sigma$ and $\alpha$.
For the terminating code, one checks directly that replacing the rightmost digit $\ge 3$ by $1$ less and appending an $\infty$-tail preserves admissibility and the value under $\varphi$ (the usual carry rule); see also \cite{Gal76}. Uniqueness of the two codes follows from monotonicity and the endpoint structure of fundamental intervals.
\end{proof}

\begin{theorem}\label{thm:f-cont-irr}
The map $f:(0,1]\to\Sigma_E$ is continuous at every irrational $x$.
\end{theorem}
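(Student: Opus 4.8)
The plan is to combine the compactness of $\Sigma_E$ (Lemma~\ref{lem:compact}) with the Lipschitz continuity of $\varphi$ (Theorem~\ref{thm:phi-cont}) and the inversion identities of Proposition~\ref{prop:basic-inversion}. Since $\varphi$ is not injective (it collapses the two codes of each rational), one cannot simply invoke the principle that a continuous bijection on a compact space is a homeomorphism. The structural fact that makes the argument work at an irrational $x$ is that the fiber $\varphi^{-1}(\{x\})$ is a single point, namely $f(x)$; I would establish this first and then feed it into a soft subsequential-compactness argument.

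To pin down the fiber, I would show that $\varphi$ maps $\Sigma_E^{\mathrm{rat}}$ into $\mathbb Q$: if some coordinate of $\eta$ equals $\infty$ the sum is a finite rational, and if $\eta$ is eventually constant equal to $m\in\mathbb N$ (with $m\ge2$) the tail is a geometric series with rational sum, so in either case $\varphi(\eta)\in\mathbb Q$. Hence $\varphi(\eta)=x$ irrational forces $\eta\in\Sigma_E^{\mathrm{irr}}$, and then Proposition~\ref{prop:basic-inversion} gives $\eta=f(\varphi(\eta))=f(x)$. Since also $\varphi(f(x))=x$, this shows $\varphi^{-1}(\{x\})=\{f(x)\}$. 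Note that this step does not even require separately checking $f(x)\in\Sigma_E^{\mathrm{irr}}$: the computation shows every preimage of $x$ coincides with $f(x)$.

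With the fiber identified, I would prove continuity by the subsequence criterion. Let $x_k\to x$ in $(0,1]$; I want $f(x_k)\to f(x)$. By Lemma~\ref{lem:compact} the space $\Sigma_E$ is sequentially compact, so any subsequence of $(f(x_k))$ admits a further subsequence $f(x_{k_j})\to\eta$ for some $\eta\in\Sigma_E$. Applying the continuous map $\varphi$ and using $\varphi\circ f=\id$ gives $x_{k_j}=\varphi(f(x_{k_j}))\to\varphi(\eta)$, so $\varphi(\eta)=x$ and therefore $\eta=f(x)$ by the previous step. Since every subsequence of $(f(x_k))$ has a further subsequence converging to the same limit $f(x)$, the whole sequence converges to $f(x)$, which proves continuity at $x$.

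I expect the only real content to be the fiber-singleton step; the topological conclusion is then routine given the tools already assembled. An alternative, more hands-on route would avoid compactness altogether: for irrational $x$ the value $x$ lies in the open interior $(\ell(\sigma),r(\sigma))$ of each level-$n$ fundamental interval $J_n(\sigma)$ of Proposition~\ref{prop:fundamental}, because the right endpoint $r(\sigma)$ is rational; a sufficiently small neighborhood of $x$ then shares its first $n$ digits with $f(x)$, forcing $d(f(y),f(x))\le 2^{-n}$. I would record this as a remark but carry out the compactness argument, which is shorter.
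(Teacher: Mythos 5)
Your proposal is correct and takes essentially the same route as the paper's proof: sequential compactness of $\Sigma_E$ (Lemma~\ref{lem:compact}), continuity of $\varphi$ (Theorem~\ref{thm:phi-cont}), and the inversion identities (Proposition~\ref{prop:basic-inversion}) to identify any subsequential limit of $(f(x_k))$ with $f(x)$; the paper phrases this as a proof by contradiction while you use the equivalent sub-subsequence criterion. Your explicit verification that $\varphi$ maps $\Sigma_E^{\mathrm{rat}}$ into $\mathbb{Q}$ (so that $\varphi(\eta)=x$ irrational forces $\eta\in\Sigma_E^{\mathrm{irr}}$) is a nice touch, since the paper asserts this step without proof.
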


\begin{proof}
Suppose $x\in(0,1]\setminus\mathbb Q$ and $f$ were not continuous at $x$.
Then there are $\varepsilon>0$ and $x_m\to x$ with $d\left(f(x_m),f(x)\right)\ge\varepsilon$ for all $m$.
Since $(\Sigma_E,d)$ is compact (Lemma \ref{lem:compact}), the sequence $(f(x_m))_{m\ge1}$ has a convergent subsequence, say $f(x_{m_k})\to\beta\in\Sigma_E$.
Since $\varphi$ is continuous,
\[
x=\lim_{k\to\infty} x_{m_k}=\lim_{k\to\infty} \varphi\left(f(x_{m_k})\right)=\varphi(\beta).
\]
As $x$ is irrational, $\beta\in\Sigma_E^{\mathrm{irr}}$ and hence
$\beta=f(\varphi(\beta))=f(x)$ (Proposition~\ref{prop:basic-inversion}).
Thus $f(x_{m_k})\to f(x)$, contradicting $d(f(x_{m_k}),f(x))\ge\varepsilon$.
\end{proof}

\begin{theorem}\label{thm:f-one-sided}
Let $x\in(0,1]\cap\mathbb Q$. Then $f$ is left–continuous at $x$ and not right–continuous at $x$.
\end{theorem}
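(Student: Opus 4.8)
The plan is to prove the two one-sided statements separately, both resting on Lemma~\ref{lem:two-codes}. Write $x=r(\sigma)$ for the admissible word $\sigma=(\sigma_1,\dots,\sigma_n)$ furnished there, so that $\alpha:=f(x)=(\sigma_1,\dots,\sigma_{n-1},\sigma_n,\sigma_n,\dots)$ and $\varphi^{-1}(\{x\})=\{\alpha,\alpha'\}$, where the terminating code $\alpha'=(\sigma_1,\dots,\sigma_{n-1},\sigma_n-1,\infty,\infty,\dots)$ first disagrees with $\alpha$ at the $n$th coordinate. For the canonical word one has $\sigma_n\ge 3$ (either $n\ge 2$ with $\sigma_n>\sigma_{n-1}\ge 2$, or $n=1$ with $\sigma_1\ge 3$); the lone exception $x=1$, where $\sigma=(2)$, is addressed at the end.

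For left-continuity I would adapt the compactness argument of Theorem~\ref{thm:f-cont-irr}, inserting one geometric step to select the correct code. Let $y_m\uparrow x$ with $y_m<x$. Since $x=r(\sigma)$ is the right endpoint of $J_n(\sigma)=(\ell(\sigma),r(\sigma)]$ (Proposition~\ref{prop:fundamental}) and $\ell(\sigma)<x$, every sufficiently large $y_m$ lies in $J_n(\sigma)$, so $f(y_m)$ begins with $\sigma$; that is, its first $n$ coordinates equal $\sigma_1,\dots,\sigma_n$. By compactness of $\Sigma_E$ (Lemma~\ref{lem:compact}) each subsequence of $(f(y_m))$ has a further subsequence $f(y_{m_k})\to\beta$, and continuity of $\varphi$ (Theorem~\ref{thm:phi-cont}) gives $\varphi(\beta)=\lim_k y_{m_k}=x$, whence $\beta\in\{\alpha,\alpha'\}$. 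Coordinatewise convergence forces $\beta_n=\sigma_n$, and as $\alpha'_n=\sigma_n-1\ne\sigma_n$ this excludes $\alpha'$ and leaves $\beta=\alpha=f(x)$. Every subsequential limit therefore equals $f(x)$, so the full sequence converges to $f(x)$, proving left-continuity.

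For the failure of right-continuity I would exhibit one digit that must jump. The word $\tau:=(\sigma_1,\dots,\sigma_{n-1},\sigma_n-1)$ is admissible, and comparing the endpoint formulas of Proposition~\ref{prop:fundamental} yields $\ell(\tau)=r(\sigma)=x$; hence $J_n(\tau)=(x,r(\tau)]$ is the fundamental interval immediately to the right of $x$, with $r(\tau)>x$. Every $y\in(x,r(\tau)]$ then satisfies $d_n(y)=\sigma_n-1$, whereas $d_n(x)=\sigma_n$, so
\[
d\bigl(f(y),f(x)\bigr)\ge 2^{-n}\rho(\sigma_n-1,\sigma_n)=2^{-n}\left(\frac{1}{\sigma_n-1}+\frac{1}{\sigma_n}\right)>0,
\]
a bound independent of $y$. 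Taking any $y_m\downarrow x$ with $y_m\in(x,r(\tau)]$ gives $f(y_m)\not\to f(x)$, so $f$ is not right-continuous at $x$.

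The only genuine obstacle is the code-selection step in the left-continuity argument: compactness alone confines the limit to the two-point set $\varphi^{-1}(\{x\})$, and the substance lies in the geometric observation that a left approach remains inside $J_n(\sigma)$, freezing the first $n$ digits at $\sigma$ and thereby ruling out the terminating code $\alpha'$; the remaining steps are routine manipulations of the endpoint formulas. Finally, the boundary point $x=1$ is genuinely exceptional: there $\varphi^{-1}(\{1\})=\{(2,2,\dots)\}$ is a singleton, so left-continuity follows exactly as above with no competing code to discard, while right-continuity is vacuous since no point of $(0,1]$ lies to the right of $1$. Accordingly the right-discontinuity assertion is to be read for $x\in(0,1)$.
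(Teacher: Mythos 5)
Your proposal is correct, and it coincides with the paper's proof for exactly half of the argument. For left-continuity you do what the paper does: trap $f(y_m)$ in the cylinder $C_n(\sigma)$ (the paper phrases this as closedness of $C_n(\sigma)$, you phrase it as coordinatewise convergence freezing the $n$th digit at $\sigma_n$ — the same mechanism), use compactness of $\Sigma_E$ and continuity of $\varphi$ to confine subsequential limits to $\varphi^{-1}(\{x\})=\{\alpha,\alpha'\}$, and discard $\alpha'$. For the failure of right-continuity you diverge: the paper reruns the compactness argument on the closed set $\Sigma_E\setminus C_n(\sigma)$ and identifies the full right limit $\lim_{t\downarrow x}f(t)=\alpha'$, whereas you construct the adjacent fundamental interval $J_n(\tau)$, $\tau=(\sigma_1,\dots,\sigma_{n-1},\sigma_n-1)$, verify $\ell(\tau)=r(\sigma)=x$, and read off the uniform jump $d(f(y),f(x))\ge 2^{-n}\bigl(\tfrac{1}{\sigma_n-1}+\tfrac{1}{\sigma_n}\bigr)$ for all $y\in(x,r(\tau)]$. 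Your version is more elementary (no compactness needed in this half) and quantitative, exhibiting which digit jumps and by how much; the paper's version proves slightly more, namely that the one-sided limit exists and equals the terminating code. One point you should make fully explicit: admissibility of $\tau$ requires $\sigma_n\ge\max\{3,\,\sigma_{n-1}+1\}$, which is your parenthetical claim about the canonical word; it does follow from the uniqueness asserted in Lemma~\ref{lem:two-codes}, since if $\sigma_n=\sigma_{n-1}$ the truncated word $(\sigma_1,\dots,\sigma_{n-1})$ has the same right endpoint, contradicting uniqueness. Finally, your handling of $x=1$ is a genuine improvement over the paper rather than a deviation from it: there the index $j$ of Lemma~\ref{lem:two-codes} does not exist, $\varphi^{-1}(\{1\})=\{(2,2,\dots)\}$ is a singleton, and no point of $(0,1]$ lies to the right of $1$, so the paper's proof of right-discontinuity (which begins by taking $z_m\downarrow x$ with $z_m>x$) silently breaks down; your reading of the right-discontinuity assertion as a statement for $x\in(0,1)\cap\mathbb Q$ is the correct repair.
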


\begin{proof}
Let $\sigma=(\sigma_1,\dots,\sigma_n)$ be as in Lemma~\ref{lem:two-codes}, so
$x=r(\sigma)$ and $J_n(\sigma)=(\ell(\sigma),r(\sigma)]$ with right endpoint $x$.
Write $\alpha=f(x)=(\sigma_1,\dots,\sigma_{n-1},\sigma_n,\sigma_n,\dotsc)$ and
$\alpha'=(\sigma_1,\dots,\sigma_{j-1},\sigma_j-1,\infty,\infty,\dotsc)$ for the terminating code (Lemma~\ref{lem:two-codes}).
Recall from Proposition~\ref{prop:fundamental} that the cylinder
\[
C_n(\sigma)\coloneqq\{\eta\in\Sigma_E:\ \eta_1=\sigma_1,\dots,\eta_n=\sigma_n\}
\]
is \emph{clopen} in $\Sigma_E$.

{\sc Left-continuity.}
Take any sequence $y_m\uparrow x$ with $y_m<x$.
For $m$ large, $y_m\in J_n(\sigma)$, hence $f(y_m)\in C_n(\sigma)$.
Because $C_n(\sigma)$ is closed, any subsequential limit $\gamma$ of $f(y_m)$ lies in $C_n(\sigma)$.
On the other hand, $\varphi$ is continuous, so $\varphi(\gamma)=\lim_m \varphi(f(y_m))=\lim_m y_m=x$.
Thus $\gamma\in\varphi^{-1}(\{x\})=\{\alpha,\alpha'\}$ (Lemma~\ref{lem:two-codes}).
Since $\alpha\in C_n(\sigma)$ but $\alpha'\notin C_n(\sigma)$ (as its first $n$ digits do not match $\sigma$), we must have $\gamma=\alpha=f(x)$.
Therefore $f(y_m)\to f(x)$ as $m\uparrow\infty$.

{\sc Failure of right-continuity.}
Take any sequence $z_m\downarrow x$ with $z_m>x$.
For $m$ large, $z_m\notin J_n(\sigma)$, hence $f(z_m)\notin C_n(\sigma)$.
Since $\Sigma_E\setminus C_n(\sigma)$ is also closed (as $C_n(\sigma)$ is clopen), any subsequential limit $\delta$ of $f(z_m)$ must lie in $\Sigma_E\setminus C_n(\sigma)$.
Again by continuity of $\varphi$,
$\varphi(\delta)=\lim_m \varphi(f(z_m))=\lim_m z_m=x$,
so $\delta\in\{\alpha,\alpha'\}$.
But $\alpha\in C_n(\sigma)$ while $\alpha'\in \Sigma_E\setminus C_n(\sigma)$, hence $\delta=\alpha'\neq f(x)$.
Therefore $\lim_{t\downarrow x} f(t)=\alpha'\ne f(x)$, and $f$ is not right–continuous at $x$.
\end{proof}

\begin{corollary}\label{cor:homeo}
The restriction $\varphi:\Sigma_E^{\mathrm{irr}}\to(0,1]\setminus\mathbb Q$ is a homeomorphism with inverse $f$.
\end{corollary}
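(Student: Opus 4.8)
The plan is to realize the asserted homeomorphism as a pair of mutually inverse continuous maps between the two subspaces, so that all the analytic content is already supplied by the earlier results and only the bookkeeping of domains and codomains remains. The first task is to check that the two candidate maps land in the correct sets. Using Proposition~\ref{prop:basic-inversion}, for $\eta\in\Sigma_E^{\mathrm{irr}}$ we have $f(\varphi(\eta))=\eta\in\Sigma_E^{\mathrm{irr}}$; since $f$ encodes every rational by an eventually constant integer sequence, hence an element of $\Sigma_E^{\mathrm{rat}}$, the value $\varphi(\eta)$ cannot be rational, giving $\varphi(\Sigma_E^{\mathrm{irr}})\subseteq(0,1]\setminus\mathbb Q$. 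Conversely, for $x\in(0,1]\setminus\mathbb Q$ the nonterminating algorithm produces only finite digits, so $f(x)$ has no $\infty$-coordinate; were $f(x)$ eventually constant in $\mathbb N$, summing the resulting trailing geometric series would force $x=\varphi(f(x))$ to be rational, a contradiction. Hence $f(x)\in\Sigma_E^{\mathrm{irr}}$, i.e. $f((0,1]\setminus\mathbb Q)\subseteq\Sigma_E^{\mathrm{irr}}$.

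With the codomains fixed, Proposition~\ref{prop:basic-inversion} shows that $\varphi|_{\Sigma_E^{\mathrm{irr}}}$ and $f|_{(0,1]\setminus\mathbb Q}$ are mutually inverse, so both are bijections between $\Sigma_E^{\mathrm{irr}}$ and $(0,1]\setminus\mathbb Q$. For continuity of $\varphi|_{\Sigma_E^{\mathrm{irr}}}$ I would simply invoke that $\varphi$ is (Lipschitz) continuous on all of $\Sigma_E$ by Theorem~\ref{thm:phi-cont}, and restrictions of continuous maps are continuous. For the inverse, Theorem~\ref{thm:f-cont-irr} gives continuity of $f$ at every irrational point of $(0,1]$; restricting the domain to $(0,1]\setminus\mathbb Q$ and corestricting the codomain to $\Sigma_E^{\mathrm{irr}}$, which carries the subspace topology, preserves continuity, so $f|_{(0,1]\setminus\mathbb Q}$ is continuous. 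A continuous bijection with continuous inverse is a homeomorphism, which is the claim.

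The steps are largely routine once the framework is assembled; the only point demanding genuine care is the verification that the two maps have exactly the stated images, namely the equalities $\varphi(\Sigma_E^{\mathrm{irr}})=(0,1]\setminus\mathbb Q$ and $f((0,1]\setminus\mathbb Q)=\Sigma_E^{\mathrm{irr}}$. I expect the one mild subtlety to be the implication that an eventually constant integer Engel code evaluates to a rational number, which I would dispatch by explicitly summing the trailing geometric series; everything else follows by combining Theorem~\ref{thm:phi-cont}, Theorem~\ref{thm:f-cont-irr}, and Proposition~\ref{prop:basic-inversion}.
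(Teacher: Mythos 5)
Your proposal is correct and follows essentially the same route as the paper: bijectivity from Proposition~\ref{prop:basic-inversion}, continuity of $\varphi$ from Theorem~\ref{thm:phi-cont}, and continuity of the inverse from Theorem~\ref{thm:f-cont-irr}. The extra bookkeeping you supply---verifying $\varphi(\Sigma_E^{\mathrm{irr}})\subseteq(0,1]\setminus\mathbb Q$ and $f((0,1]\setminus\mathbb Q)\subseteq\Sigma_E^{\mathrm{irr}}$, which the paper leaves implicit in Proposition~\ref{prop:basic-inversion}---is a sound and welcome refinement, not a different argument.
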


\begin{proof}
The bijectivity of $\varphi \colon \Sigma^{\mathrm{irr}}_E \to (0,1] \setminus \mathbb{Q}$ with inverse $f$ is established by Proposition~\ref{prop:basic-inversion}. The continuity of $\varphi$ is a consequence of Theorem~\ref{thm:phi-cont} (Lipschitz continuity). The continuity of the inverse map $f$ on $(0,1]\setminus\mathbb{Q}$ is established by Theorem~\ref{thm:f-cont-irr}. Since $\varphi$ is a continuous bijection with a continuous inverse, it is a homeomorphism.
\end{proof}

\section{Applications: comeager divergence sets and further remarks}\label{sec:applications}

In \cite{AhnIJNT} we studied Pierce digits; here we use the present Engel topology to obtain a general Baire category statement for Engel digits.

\subsection{Comeager divergence for functions of the digits}\label{subsec:comeager-divergence}
For a nonnegative function $\phi\colon\{2,3,\dotsc\}\to[0,\infty)$, write
\[
\mathcal D_\phi\coloneqq\left\{x\in(0,1]: \sum_{n=1}^\infty \phi(d_n(x))=\infty\right\}.
\]

\begin{theorem}\label{thm:Gdelta}
Let $\phi\ge0$ and assume that the set $\{t\ge2: \phi(t)>0\}$ is unbounded. Then there exists a dense $G_\delta$ set $\mathcal G_\phi\subseteq(0,1]$ with $\mathcal G_\phi\subseteq \mathcal D_\phi$. In particular, $\mathcal D_\phi$ is comeager (residual) in $(0,1]$.
\end{theorem}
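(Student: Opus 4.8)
The plan is to write $\mathcal D_\phi$ as a countable intersection of sets, each of which contains a dense open subset, and then to apply the Baire category theorem. Writing $S_K(x)\coloneqq\sum_{n=1}^{K}\phi(d_n(x))$ for the partial sums and using $\phi\ge0$, one has $\sum_{n\ge1}\phi(d_n(x))=\infty$ exactly when $\sup_K S_K(x)=\infty$. Hence, putting
\[
G_M\coloneqq\{x\in(0,1]:\ S_K(x)>M\ \text{for some }K\ge1\}\qquad(M\ge1),
\]
I get $\mathcal D_\phi=\bigcap_{M\ge1}G_M$, so it suffices to produce, for each $M$, a dense open set $O_M\subseteq G_M$; then $\mathcal G_\phi\coloneqq\bigcap_{M\ge1}O_M$ is a dense $G_\delta$ contained in $\mathcal D_\phi$, and comeagerness follows.

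The structural input that makes $O_M$ easy to build is that $S_K$ is constant on each level-$K$ fundamental interval: since $J_K(\sigma)=f^{-1}(C_K(\sigma))$, every $x\in J_K(\sigma)$ has first $K$ digits equal to $\sigma$, so $S_K(x)=\Phi(\sigma)$, where $\Phi(\sigma)\coloneqq\sum_{i=1}^{K}\phi(\sigma_i)$. I would therefore set
\[
O_M\coloneqq\bigcup\bigl\{(\ell(\sigma),r(\sigma)):\ \sigma\ \text{admissible},\ \Phi(\sigma)>M\bigr\},
\]
the union of the open interiors of those fundamental intervals on which the partial sum already exceeds $M$. Each such interior is contained in $J_K(\sigma)\subseteq G_M$, so $O_M\subseteq G_M$, and $O_M$ is open as a union of open intervals.

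Density of $O_M$ is the heart of the matter. Fix a nonempty open interval $I\subseteq(0,1]$. By Proposition~\ref{prop:fundamental} a level-$n$ fundamental interval has length
\[
r(\sigma)-\ell(\sigma)=\frac{1}{\sigma_1\cdots\sigma_{n-1}\,\sigma_n(\sigma_n-1)}\le 2^{-n},
\]
so the level-$n$ intervals partition $(0,1]$ with mesh tending to $0$; taking $n$ large I can place some $J_n(\sigma)=(\ell(\sigma),r(\sigma)]\subseteq I$. Now I inflate the partial sum without leaving $I$: the hypothesis that $\{t\ge2:\phi(t)>0\}$ is unbounded lets me choose $t\ge\sigma_n$ with $\phi(t)>0$ and append it $L$ times, forming the admissible word $\tau=(\sigma_1,\dots,\sigma_n,t,\dots,t)$ (admissibility, i.e.\ monotonicity, holds precisely because $t\ge\sigma_n$). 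Nesting gives $J_{n+L}(\tau)\subseteq J_n(\sigma)\subseteq I$, while $\Phi(\tau)=\Phi(\sigma)+L\,\phi(t)>M$ as soon as $L>(M-\Phi(\sigma))/\phi(t)$. Hence the nonempty open interval $(\ell(\tau),r(\tau))\subseteq O_M$ lies in $I$, so $O_M$ meets $I$, proving $O_M$ dense.

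The step I expect to be the crux is this density argument, and the unboundedness hypothesis is exactly what makes it go through: to raise $\Phi$ while preserving admissibility I must append digits that are both at least as large as the current last digit and carry positive $\phi$-weight, and such digits exist in unbounded supply precisely when $\{t:\phi(t)>0\}$ is unbounded (a bounded positive set could trap the nondecreasing tail in the zero-set of $\phi$). A minor but necessary technicality is passing from the half-open fundamental intervals to their open interiors so that $O_M$ is genuinely open. Granting density of each $O_M$, the Baire category theorem—applicable since $(0,1]$ is completely metrizable, e.g.\ via $x\mapsto 1/x$—shows that $\mathcal G_\phi=\bigcap_{M\ge1}O_M$ is a dense $G_\delta$, and $\mathcal G_\phi\subseteq\bigcap_{M\ge1}G_M=\mathcal D_\phi$; in particular $\mathcal D_\phi$ is comeager in $(0,1]$.
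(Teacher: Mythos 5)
Your proof is correct, but it takes a genuinely different route from the paper's. The paper works entirely on the symbolic side: it defines open sets $\Upsilon_m\subseteq\Sigma_E$ as unions of clopen cylinders $C_n(\sigma)$ with $\sum_k\phi(\sigma_k)>m$, proves that $\Upsilon_m\cap\Sigma_E^{\mathrm{irr}}$ is dense in $\Sigma_E^{\mathrm{irr}}$, intersects over $m$, and then transports the resulting dense $G_\delta$ to $(0,1]\setminus\mathbb Q$ via the homeomorphism $\varphi\colon\Sigma_E^{\mathrm{irr}}\to(0,1]\setminus\mathbb Q$ of Corollary~\ref{cor:homeo} (which in turn rests on Theorems~\ref{thm:phi-cont} and~\ref{thm:f-cont-irr}), finally using that the irrationals are a dense $G_\delta$ in $(0,1]$. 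You instead work directly on the interval side: your $O_M$ is a union of \emph{interiors} of fundamental intervals $(\ell(\sigma),r(\sigma))$ with $\Phi(\sigma)>M$, density is proved by the mesh bound $r(\sigma)-\ell(\sigma)\le 2^{-n}$ from Proposition~\ref{prop:fundamental} together with the partition property of level-$n$ intervals, and the conclusion comes from the Baire category theorem in the completely metrizable space $(0,1]$. The combinatorial heart is identical in both arguments --- appending a long tail of a digit $t\ge\sigma_n$ with $\phi(t)>0$, which is exactly where the unboundedness hypothesis enters --- but the surrounding topology differs. Your version is more elementary and self-contained: it needs only Proposition~\ref{prop:fundamental} and Baire in $(0,1]$, and entirely bypasses the continuity theory for $f$ and the homeomorphism on irrationals (your $\mathcal G_\phi$ may contain rationals, which is harmless since $\mathcal G_\phi\subseteq\mathcal D_\phi$ is all that is required). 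What the paper's route buys is precisely the demonstration of its own framework: category statements reduce to the clopen cylinder structure of $\Sigma_E$ and become, as the paper puts it, essentially tautological once the symbolic topology is in place; your route shows the theorem never needed that machinery in the first place.
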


\begin{proof}
For $m\in\mathbb N$, define
\[
\Upsilon_m\coloneqq
\bigcup_{\substack{\sigma=(\sigma_1,\dotsc,\sigma_n) \text{ admissible}\\ \sum_{k=1}^n \phi(\sigma_k)>m}}
C_n(\sigma) \subseteq \Sigma_E.
\]
Each $\Upsilon_m$ is open (union of clopen cylinders). We claim that $\Upsilon_m\cap\Sigma_E^{\mathrm{irr}}$ is dense in $\Sigma_E^{\mathrm{irr}}$.
Indeed, given a nonempty cylinder $C_N(\tau)$ with $\tau_N\in\mathbb N$ (equivalently, $C_N(\tau)\cap\Sigma_E^{\mathrm{irr}}\neq\varnothing$), choose
$t\ge\max\{2,\tau_N\}$ with $\phi(t)>0$ (possible since $\{t: \phi(t)>0\}$ is unbounded), and append a long enough $t$–tail to $\tau$ so that $\sum_{k=1}^{|\sigma|}\phi(\sigma_k)>m$. Then
$C_{|\sigma|}(\sigma)\subseteq C_N(\tau)\cap\Upsilon_m$, proving density in $\Sigma_E^{\mathrm{irr}}$.

Hence
\[
\Gamma_\phi\coloneqq\bigcap_{m=1}^\infty \Upsilon_m
\]
satisfies that $\Gamma_\phi\cap\Sigma_E^{\mathrm{irr}}$ is a dense $G_\delta$ in $\Sigma_E^{\mathrm{irr}}$; moreover, if $\eta\in\Gamma_\phi\cap\Sigma_E^{\mathrm{irr}}$, then the partial sums $\sum_{k=1}^n\phi(\eta_k)$ are unbounded in $n$, so $\sum_{k\ge1}\phi(\eta_k)=\infty$.

Set
\[
\mathcal G_\phi\coloneqq \varphi\left(\Gamma_\phi\cap\Sigma_E^{\mathrm{irr}}\right)\ \subset\ (0,1]\setminus\mathbb Q.
\]
By Corollary~\ref{cor:homeo}, $\varphi:\Sigma_E^{\mathrm{irr}}\to(0,1]\setminus\mathbb Q$ is a homeomorphism; hence $\mathcal G_\phi$ is a dense $G_\delta$ in $(0,1]\setminus\mathbb Q$, and for $x\in\mathcal G_\phi$ with $\eta=f(x)$ we have
\[
\sum_{n=1}^\infty\phi\left(d_n(x)\right)=\sum_{n=1}^\infty\phi(\eta_n)=\infty.
\]
Since $(0,1]\setminus\mathbb Q$ is dense $G_\delta$ in $(0,1]$, it follows that $\mathcal G_\phi$ is a dense $G_\delta$ set in $(0,1]$. As $\mathcal G_\phi \subseteq \mathcal D_\phi$, we conclude that $\mathcal D_\phi$ is comeager (residual) in $(0,1]$.
\end{proof}

\begin{corollary}\label{cor:Engel-ls}
For $s>0$ put
\[
\mathcal D (s)\coloneqq\Bigl\{x\in(0,1]:\ \sum_{n=1}^{\infty}d_n(x)^{-s}=\infty\Bigr\}.
\]
Then $\mathcal D (s)$ contains a dense $G_\delta$ and is comeager in $(0,1]$.
\end{corollary}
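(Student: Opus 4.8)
The plan is to derive this as an immediate specialization of Theorem~\ref{thm:Gdelta}. First I would set $\phi(t)\coloneqq t^{-s}$ for $t\in\{2,3,\dots\}$, which is a nonnegative function on the required domain. Next I would verify the single hypothesis of Theorem~\ref{thm:Gdelta}, namely that the set $\{t\ge2:\phi(t)>0\}$ is unbounded. Since $s>0$, one has $t^{-s}>0$ for every integer $t\ge2$, so the positivity set is the entire domain $\{2,3,4,\dots\}$, which is trivially unbounded.

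With this choice of $\phi$, the two divergence sets coincide: for each $x\in(0,1]$,
\[
\sum_{n=1}^\infty \phi\left(d_n(x)\right)=\sum_{n=1}^\infty d_n(x)^{-s},
\]
so $\mathcal D_\phi=\mathcal D(s)$. Applying Theorem~\ref{thm:Gdelta} then furnishes a dense $G_\delta$ set $\mathcal G_\phi\subseteq\mathcal D(s)$ and shows that $\mathcal D(s)$ is comeager in $(0,1]$, which is precisely the assertion.

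I expect no genuine obstacle here, as the corollary is a pure specialization and the substantive work has already been carried out in Theorem~\ref{thm:Gdelta}. The only item requiring verification---the unboundedness of the positivity set---holds automatically for every $s>0$, because the power function $t\mapsto t^{-s}$ never vanishes on $\{2,3,\dots\}$. In particular, the conclusion is uniform in the parameter $s$, and no case distinction (e.g.\ small versus large $s$) is needed.
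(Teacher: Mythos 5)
Your proposal is correct and coincides with the paper's own proof, which likewise applies Theorem~\ref{thm:Gdelta} with $\phi(t)\coloneqq t^{-s}$; you merely spell out the (trivial) verification that the positivity set $\{t\ge2:\phi(t)>0\}$ is all of $\{2,3,\dots\}$ and hence unbounded. No gaps.
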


\begin{proof}
Apply Theorem~\ref{thm:Gdelta} with $\phi(t)\coloneqq t^{-s}$.
\end{proof}

Recall the \emph{convergence exponent} of Engel digit sequences $\lambda:(0,1]\to[0,\infty]$ defined by
\[
\lambda(x)=\inf\left\{s\ge0: \sum_{n=1}^\infty d_n(x)^{-s}<\infty\right\}.
\]
For irrational $x$ it is classical that $\lambda(x)=\limsup_{n\to\infty}\frac{\log n}{\log d_n(x)}$ (see \cite{SW21}).

\begin{corollary}[{\cite[Theorem~3.5]{SW21}}]\label{cor:lambda-comeager}
The set $\{x\in(0,1]:\ \lambda(x)=\infty\}$ is comeager in $(0,1]$.
Equivalently, for every $\alpha\in[0,\infty)$, the set $\{x\in(0,1]:\ \lambda(x)\le \alpha\}$ is meager in $(0,1]$.
\end{corollary}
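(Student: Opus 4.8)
The plan is to express the target set as a countable intersection of the divergence sets already shown to be comeager, and then appeal to the Baire category theorem. First I would unwind the definition of $\lambda$. Since $d_n(x)\ge 2$, the map $s\mapsto\sum_n d_n(x)^{-s}$ is nonincreasing, and at $s=0$ it equals $\sum_n 1=\infty$ for every $x$. Hence $\lambda(x)=\infty$ — equivalently $\{s\ge0:\sum_n d_n(x)^{-s}<\infty\}=\varnothing$ — holds if and only if $\sum_n d_n(x)^{-s}=\infty$ for all $s>0$, and by monotonicity it suffices to test $s$ along the positive integers. Therefore
\[
\{x\in(0,1]:\lambda(x)=\infty\}=\bigcap_{s>0}\mathcal D(s)=\bigcap_{k=1}^\infty\mathcal D(k).
\]

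Next I would apply Corollary~\ref{cor:Engel-ls} with $s=k$, which provides a dense $G_\delta$ set $\mathcal G_k\subseteq\mathcal D(k)$ and shows that each $\mathcal D(k)$ is comeager. As $(0,1]$ is a Baire space, the intersection $\bigcap_k\mathcal G_k$ is again a dense $G_\delta$, and it is contained in $\bigcap_k\mathcal D(k)=\{x:\lambda(x)=\infty\}$; equivalently, the complement $\bigcup_k((0,1]\setminus\mathcal D(k))$ is a countable union of meager sets, hence meager. Either way $\{x:\lambda(x)=\infty\}$ is comeager, which is the first assertion.

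For the stated equivalence I would invoke the duality between comeager and meager sets. Comeagerness of $\{x:\lambda(x)=\infty\}$ says that $\{x:\lambda(x)<\infty\}$ is meager; since $\{x:\lambda(x)\le\alpha\}\subseteq\{x:\lambda(x)<\infty\}$ for each finite $\alpha$, every sublevel set is meager. Conversely, if each $\{x:\lambda(x)\le\alpha\}$ is meager, then $\{x:\lambda(x)<\infty\}=\bigcup_{k\ge1}\{x:\lambda(x)\le k\}$ is meager, so its complement $\{x:\lambda(x)=\infty\}$ is comeager.

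Since the real work is carried out by Corollary~\ref{cor:Engel-ls}, I do not anticipate a serious obstacle. The only point meriting care is the reduction in the first step: one must respect the convention $\inf\varnothing=\infty$ and set aside the trivial divergence at $s=0$, so that the intersection is taken over $s>0$ (and may be thinned to the integers by monotonicity) rather than over all $s\ge0$.
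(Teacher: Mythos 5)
Your proof is correct and, for the main assertion, follows the same route as the paper: identify $\{x\in(0,1]:\lambda(x)=\infty\}$ with $\bigcap_{k\ge1}\mathcal D(k)$ and combine Corollary~\ref{cor:Engel-ls} with the fact that a countable intersection of comeager sets is comeager. You diverge from the paper in two places, both defensibly. First, you justify the identity $\{x:\lambda(x)=\infty\}=\bigcap_{k\ge1}\mathcal D(k)$ explicitly (via $\inf\varnothing=\infty$, automatic divergence at $s=0$, and monotonicity in $s$ to reduce to integer exponents); the paper asserts this equality without comment. Second, and more substantively, you handle the equivalence clause purely formally from meager/comeager duality: $\{\lambda\le\alpha\}\subseteq\{\lambda<\infty\}$, which is meager since it is the complement of a comeager set, and conversely $\{\lambda<\infty\}=\bigcup_{k\ge1}\{\lambda\le k\}$. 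The paper instead re-argues meagerness of each sublevel set directly, writing $\{x:\lambda(x)\le\alpha\}=\bigcup_{q\in\mathbb Q,\ q>\alpha}\{x:\sum_{n}d_n(x)^{-q}<\infty\}$ and invoking meagerness of each convergence set. Note that the paper's displayed identity is really only an inclusion $\subseteq$ --- membership in the union forces only $\lambda(x)\le q$ for some rational $q>\alpha$, not $\lambda(x)\le\alpha$ --- which suffices for the meagerness conclusion but is not an equality; your duality argument sidesteps this issue entirely and is the cleaner of the two.
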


\begin{proof}
For each $k\in\mathbb N$, Corollary~\ref{cor:Engel-ls} gives that $\mathcal D (k)$ is comeager. Hence
\[
\{x\in(0,1]:\lambda(x)=\infty\}=\bigcap_{k=1}^\infty\mathcal D (k)
\]
is a countable intersection of comeager sets, thus comeager. For the equivalence, note that $\{x\in(0,1]:\sum_{n=1}^\infty d_n(x)^{-q}<\infty\}$ is meager for each rational $q>0$; then
\[
\{x\in(0,1]:\lambda(x)\le\alpha\}=\bigcup_{q\in\mathbb Q,\ q>\alpha}\left\{x\in(0,1]:\sum_{n=1}^\infty d_n(x)^{-q}<\infty\right\}
\]
is a countable union of meager sets, hence meager.
\end{proof}

\subsection{Concluding remarks}
The symbolic topology for Engel expansions yields a clean qualitative picture: $\varphi$ is globally Lipschitz, while $f$ is continuous exactly at irrationals and one–sided continuous at rationals. The comeager divergence phenomena in Theorem~\ref{thm:Gdelta} and Corollary~\ref{cor:Engel-ls} follow from the clopen cylinder structure on $\Sigma_E$ together with the homeomorphism $\varphi \colon \Sigma_E^{\mathrm{irr}}\to (0,1]\setminus\mathbb{Q}$; the fundamental intervals in $(0,1]$ are simply the mirror of this structure under $\varphi$. In short, the canonical Engel topology makes continuity and Baire category statements essentially tautological.

\end{document}